\def\HH{\overline{H}}
\def\NN{\mathbb N}
\newtheorem{thm}{Theorem}[section]
\newtheorem{lem}[thm]{Lemma}
\newtheorem{cor}[thm]{Corollary}
\begin{document}

\title{\bf More congruences from Ap\'ery-like formulae}

\author{{\sc Roberto Tauraso}\\
%\footnote{
Dipartimento di Matematica\\
Universit\`a di Roma ``Tor Vergata'', Italy\\
{\tt tauraso@mat.uniroma2.it}\\
{\tt http://www.mat.uniroma2.it/$\sim$tauraso}
%}
}

\date{}
\maketitle
%\thanks{}
\begin{abstract}
\noindent
We present some congruences modulo $p^{6-r}$
for sums of the type $\sum_{k=0}^{(p-3)/2}x^k{2k\choose k}/(2k+1)^r$,
for $r=1,2,3$ where $p>5$ is a prime.
This is a preliminary draft and more refined versions will follow.
\end{abstract}

%%%%%%%%%%%%%%%%%%%%%%%%%%%%%%%%%%%%%%%%%%%%%%%%%%%%%%%%%%%%%%%%%%%%%%%%%%
\section{Introduction}
%%%%%%%%%%%%%%%%%%%%%%%%%%%%%%%%%%%%%%%%%%%%%%%%%%%%%%%%%%%%%%%%%%%%%%%%%%

In proving the irrationality $\zeta(3)$,  Ap\`ery \cite{Ap:79} mentioned the identities
$$
\sum_{k=1}^{\infty}{1\over k^2}{2k \choose k}^{-1}={1\over 3}\, \zeta(2),\qquad
\sum_{k=1}^{\infty}{(-1)^k\over k^3}{2k \choose k}^{-1}=-{2\over 5}\, \zeta(3)
.$$
Later Koecher \cite{Ko:80}, and Leshchiner \cite{Le:81}, found
several analogous results for other $\zeta(r)$.
In particular, in \cite{Le:81}, the author gives several proofs of four identities.
Two of them, namely (3b) and (4b), are connected with the series expansion of odd powers of 
the complex function $\arcsin(z)$ (see \cite{BC:07}): for $|z|<2$ and for any odd integer $r>0$ then
$$
{\arcsin(z/2)^r}={r!\over 2}
\sum_{k=0}^{\infty}{{2k\choose k}\over 16^k}\cdot {z^{2k+1}\over (2k+1)}\cdot \HH_k(\{2\}^{r-1\over 2}).
$$
where 
$$\HH_n(a_1,a_2,\dots,a_d)=
\sum_{0\leq k_1<k_2<\dots<k_r<n}\; {1\over (2k_1+1)^{a_1}(2k_2+1)^{a_2}\cdots (2k_r+1)^{a_r}},$$
with $(a_1,a_2,\dots,a_r)\in (\NN^*)^r$.

By revisiting the combinatorial proof (due to D. Zagier) presented in Section 5 in \cite{Le:81}, it is easy to obtain
the finite versions of these identities: if $r$ is a positive odd integer then
\begin{align}\label{idodd}\nonumber
\sum_{k=0}^{n-1}{{2k\choose k}\over 16^k}&
\left(\sum_{j=0}^{{r-1\over 2}}{(-1)^j\HH_k(\{2\}^j)\over (2k+1)^{r-2j}}
-{(-1)^{{r-1\over 2}}\over 4}\cdot{\HH_k(\{2\}^{{r-1\over 2}})\over (2k+1)}\right)\\
&=\sum_{k=0}^{n-1}{(-1)^k\over (2k+1)^r}+{(-1)^{{r-1\over 2}}\over 4}
\sum_{k=0}^{n-1}{{2k\choose k}\over 16^k{n+k\choose 2k+1}}\cdot{(-1)^{n-k}\HH_k(\{2\}^{{r-1\over 2}})\over (2k+1)},
\end{align}
and if $r$ is a positive even integer then
\begin{align}\label{ideven}\nonumber
\sum_{k=0}^{n-1}{{2k\choose k}\over (-16)^k}&
\left(\sum_{j=0}^{{r\over 2}-1}{(-1)^j\HH_k(\{2\}^j)\over (2k+1)^{r-2j}}
+{(-1)^{{r\over 2}-1}\over 4}\cdot{\HH_k(\{2\}^{{r\over 2}-1})\over (2k+1)^2}\right)\\
&=\sum_{k=0}^{n-1}{1\over (2k+1)^r}+{(-1)^{{r\over 2}-1}\over 4}
\sum_{k=0}^{n-1}{{2k\choose k}\over (-16)^k{n+k\choose 2k+1}}\cdot{\HH_k(\{2\}^{{r\over 2}-1})\over (2k+1)^2}.
\end{align}

After some preliminary results, starting from \eqref{idodd} for $r=1$, and \eqref{ideven} for $r=2$,  that is
\begin{align}\label{iduno}
{3\over 4}\sum_{k=0}^{n-1}{{2k\choose k}\over 16^k(2k+1) }&=
\sum_{k=0}^{n-1}{(-1)^k\over (2k+1)}+{(-1)^n\over 4}
\sum_{k=0}^{n-1}{{2k\choose k}\over (-16)^k{n+k\choose 2k+1}}\cdot{1\over 2k+1},\\
\label{iddue}
{5\over 4}\sum_{k=0}^{n-1}{{2k\choose k}\over (-16)^k(2k+1)^2}&=
\sum_{k=0}^{n-1}{1\over (2k+1)^2}+{1\over 4}
\sum_{k=0}^{n-1}{{2k\choose k}\over (-16)^k{n+k\choose 2k+1}}\cdot{1\over (2k+1)^2},
\end{align}
we prove that for any prime $p>5$
\begin{align*}
\sum_{k=0}^{{p-3\over 2}}{{2k \choose k}\over 16^k(2k+1)}&\equiv
(-1)^{{p-1\over 2}}\left({H_{p-1}(1)\over 12}+{3\over 160}\,p^4 B_{p-5}\right)
\pmod{p^5},\\
\sum_{k=0}^{{p-3\over 2}}{{2k \choose k}\over (-16)^k(2k+1)^2}&\equiv
{H_{p-1}(1)\over 5p}+{7\over 20}\,p^3 B_{p-5}
\pmod{p^4},
\end{align*}
where $H_n(r)=\sum_{k=1}^{n}{1\over k^r}$ and $B_n$ denotes the $n$-th Bernoulli number.
In the last section we consider the case $r=3$.
These congruences confirm as conjectures in \cite[Conjecture 5.1]{ZWS:11}.
Notice that by taking the limit as $n$ goes to infinity in \eqref{iduno} and \eqref{iddue} we obtain
\begin{align*}
\sum_{k=0}^{\infty}{{2k \choose k}\over 16^k(2k+1)}&=
{4\over 3}\sum_{k=0}^{\infty}{(-1)^k\over (2k+1)}={4\over 3}\,\arctan(1)={\pi\over 3},\\
\sum_{k=0}^{\infty}{{2k \choose k}\over (-16)^k(2k+1)^2}&=
{4\over 5}\sum_{k=0}^{\infty}{1\over (2k+1)^2}={3\over 5}\,\zeta(2)={\pi^2\over 10},
\end{align*}
which allow us to evaluate the analogy between the finite and the infinite sum.
For more results of this flavour, involving  Ap\'ery-like formulae, see for example \cite{PP:11,MT:11,Ta:10,ZWS:11}.
In the recent preprint \cite{PP:11b}, the authors prove congruences modulo $p^{4-r}$ for sums of the general form
$\sum_{k=0}^{(p-3)/2}{2k\choose k}x^k/(2k+1)^r$  for $r=1,2$ in terms of the finite polylogarithms.  

%%%%%%%%%%%%%%%%%%%%%%%%%%%%%%%%%%%%%%%%%%%%%%%%%%%%%%%%%%%%%%%%%%%%%%%%%%
\section{Results concerning multiple harmonic sums}
%%%%%%%%%%%%%%%%%%%%%%%%%%%%%%%%%%%%%%%%%%%%%%%%%%%%%%%%%%%%%%%%%%%%%%%%%%
We define the {\it multiple harmonic sum} as
$$H_n(a_1,a_2,\dots,a_r)=
\sum_{0< k_1<k_2<\dots<k_r\leq n}\; {1\over k_1^{a_1}k_2^{a_2}\cdots k_r^{a_r}},$$
where $n\geq r>0$ and $(a_1,a_2,\dots,a_r)\in (\NN^*)^r$.
The values of many harmonic sums modulo a power of prime $p$ are well known. 
Here there is a list of results that we will need later. 

\begin{enumerate}

\item[(i)] (\cite[Theorem 5.1]{Sunzh:00}) for any prime $p>r+2$ we have
\begin{align*}
H_{p-1}(r)\equiv
\begin{cases}
-\frac{r(r+1)}{2(r+2)}\,p^2\,B_{p-r-2}  \pmod{p^3} &\mbox{if $r$ is odd,}\\
\frac{r}{r+1}\,p\,B_{p-r-1}  \pmod{p^2} &\mbox{if $r$ is even;}
\end{cases}
\end{align*}

\item[(ii)] (\cite[Theorem 3.1]{Zh:06}) for $r,s>0$, and for any prime $p>r+s$, we have
$$H_{p-1}(r,s)\equiv {(-1)^s\over r+s}{r+s\choose s} \,B_{p-r-s}\pmod{p};$$

\item[(iii)] (\cite[Theorem 3.5]{Zh:06}) for $r,s,t>0$, and for any prime $p>r+s+t$ such that
$r+s+t$ is odd, we have
$$H_{p-1}(r,s,t)\equiv {1\over 2(r+s+t)}\left((-1)^r{r+s+t\choose r}-(-1)^t{r+s+t\choose t}\right) \,B_{p-r-s-t}\pmod{p};$$

\item[(iv)] (\cite[Theorem 2.1]{Ta:10}) for any prime $p>5$, 
$$H_{p-1}(1)\equiv -{1\over 2}\,H_{p-1}(2)-{1\over 6}\,H_{p-1}(3)\pmod{p^5};$$

\item[(v)] (\cite[Lemma 3]{PP:11}) for any prime $p>5$, 
$$H_{p-1}(1,2)\equiv -3\,{H_{p-1}(1)\over p^2}+{1\over 2}\,p^2B_{p-5}\pmod{p^3};$$

\item[(vi)] (\cite[Theorem 5.2]{Sunzh:00}) for any prime $p>r+4$ we have
\begin{align*}
H_{{p-1\over 2}}(r)\equiv
\begin{cases}
-\frac{2^r-2}{r}\,B_{p-r}  \pmod{p} &\mbox{if $r>1$ is odd,}\\
{r(2^{r+1}-1)\over 2(r+1)}\,p\,B_{p-r-1}  \pmod{p^2} &\mbox{if $r$ is even.}
\end{cases}
\end{align*}
\end{enumerate}

The rest of this section is dedicated to proving some congruences involving $H_{{p-1\over 2}}(r)$ and $H_{{p-1\over 2}}(r,s)$.

\begin{lem}\label{Ldue} Let $r,a>0$ then for any prime $p>r+2$ 
\begin{align}\label{C1}
H_{p-1}(r)&\equiv H_{{p-1\over 2}}(r)+(-1)^r\sum_{k=0}^a{r-1+k\choose k} H_{{p-1\over 2}}(r+k)p^k\pmod{p^{a+1}},
\\\label{C3}\nonumber
H_{{p-1\over 2}}(r)&\equiv H_{\lfloor{p\over 4}\rfloor}(r)+
(-2)^r\sum_{k=0}^a{r-1+k\choose k} H_{{p-1\over 2}}(r+k)p^k\\
&\qquad -(-1)^r\sum_{k=0}^a{r-1+k\choose k}{H_{\lfloor{p\over 4}\rfloor}(r+k)\over 2^k}\, p^k \pmod{p^{a+1}}.
\end{align}
Moreover, if $r,s>0$ such that $r+s$ is odd then for any prime $p>r+s$
\begin{equation}\label{C2}
H_{{p-1\over 2}}(r,s)\equiv {B_{p-r-s}\over 2(r+s)}\left((-1)^s{r+s \choose s}+2^{r+s}-2\right)\pmod{p}.
\end{equation}
\end{lem}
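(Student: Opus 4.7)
For \eqref{C1}, my plan is to split $H_{p-1}(r)=H_{(p-1)/2}(r)+\sum_{k=(p+1)/2}^{p-1}k^{-r}$ and apply the substitution $k\mapsto p-j$ in the tail to get
$$\sum_{k=(p+1)/2}^{p-1}\frac{1}{k^r}=(-1)^r\sum_{j=1}^{(p-1)/2}\frac{1}{j^r}\,\Bigl(1-\tfrac{p}{j}\Bigr)^{-r}.$$
Expanding $(1-p/j)^{-r}=\sum_{k\ge 0}\binom{r-1+k}{k}(p/j)^k$ and truncating modulo $p^{a+1}$ yields exactly the claimed formula, because the coefficient of $p^k$ produces the inner sum $\sum_{j=1}^{(p-1)/2}j^{-(r+k)}=H_{(p-1)/2}(r+k)$.

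For \eqref{C3}, the same idea carries over, but now the substitution $k\mapsto(p-j)/2$ is applied to the tail
$\sum_{\lfloor p/4\rfloor<k\le(p-1)/2}k^{-r}$. I would verify case-by-case for $p\equiv 1$ and $p\equiv 3\pmod 4$ that the image of this range consists precisely of the \emph{odd} integers $j$ in $\{1,\dots,(p-1)/2\}$ (the case check will be the mildly annoying bookkeeping step). One thus gets
$$H_{(p-1)/2}(r)-H_{\lfloor p/4\rfloor}(r)=2^r\sum_{\substack{1\le j\le(p-1)/2\\j\text{ odd}}}\frac{1}{(p-j)^r},$$
and expanding $(p-j)^{-r}$ as above together with the identity
$\sum_{j\text{ odd},\,j\le(p-1)/2}j^{-(r+k)}=H_{(p-1)/2}(r+k)-2^{-(r+k)}H_{\lfloor p/4\rfloor}(r+k)$
gives the formula after simplifying $(-2)^r\cdot 2^{-(r+k)}=(-1)^r 2^{-k}$.

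For \eqref{C2}, I would split the nested sum $H_{p-1}(r,s)=\sum_{1\le i<j\le p-1}i^{-r}j^{-s}$ into three ranges according to whether $i,j$ lie in $\{1,\dots,(p-1)/2\}$ or in $\{(p+1)/2,\dots,p-1\}$. Using the involution $k\mapsto p-k$ on the upper half, the block $(p-1)/2<i<j$ becomes $(-1)^{r+s}H_{(p-1)/2}(s,r)\pmod p$, while the mixed block $i\le(p-1)/2<j$ becomes $(-1)^s H_{(p-1)/2}(r)H_{(p-1)/2}(s)\pmod p$. Since $r+s$ is odd one of $r,s$ is even, and property (vi) tells us that $H_{(p-1)/2}(\text{even})\equiv 0\pmod p$, so the mixed block vanishes. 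Thus (using $(-1)^{r+s}=-1$)
$$H_{p-1}(r,s)\equiv H_{(p-1)/2}(r,s)-H_{(p-1)/2}(s,r)\pmod p.$$

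To recover $H_{(p-1)/2}(r,s)$ itself I combine this with the stuffle relation
$H_{(p-1)/2}(r,s)+H_{(p-1)/2}(s,r)+H_{(p-1)/2}(r+s)=H_{(p-1)/2}(r)H_{(p-1)/2}(s)$; the right side is $\equiv 0\pmod p$ by the same parity argument, and $H_{(p-1)/2}(r+s)\equiv-\frac{2^{r+s}-2}{r+s}B_{p-r-s}\pmod p$ by (vi). Adding this to the value $H_{p-1}(r,s)\equiv\frac{(-1)^s}{r+s}\binom{r+s}{s}B_{p-r-s}\pmod p$ from (ii) and dividing by $2$ yields \eqref{C2}.

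The only genuine obstacle I foresee is the parity bookkeeping in \eqref{C3} (handling $p\equiv 1,3\pmod 4$ uniformly) and keeping track of the signs $(-1)^r,(-2)^r$ when one rewrites $(-2)^r/2^{r+k}$; everything else reduces to geometric expansion of $(1-p/j)^{-r}$ and the shuffle/stuffle identities already available from (i)--(vi).
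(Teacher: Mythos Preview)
Your proposal is correct and follows essentially the same route as the paper. For \eqref{C1} the paper records the identity $H_{p-1}(r)=H_{(p-1)/2}(r)+\sum_{k=1}^{(p-1)/2}k^{-r}(1-p/k)^{-r}$ and expands, which is exactly your substitution $k\mapsto p-j$; for \eqref{C3} the paper writes the tail as $2^r\bigl(\sum_{k\le(p-1)/2}(p-k)^{-r}-\sum_{k\le\lfloor p/4\rfloor}(p-2k)^{-r}\bigr)$, which is precisely your odd--even decomposition of the index $j$ after the change of variable; and for \eqref{C2} the paper performs the same three-block split, the same stuffle relation, and the same appeal to (ii) and (vi). Your write-up is in fact slightly more explicit than the paper's (you justify $H_{(p-1)/2}(r)H_{(p-1)/2}(s)\equiv 0$ via the parity of $r,s$, and you spell out the $p\equiv 1,3\pmod 4$ cases), but there is no difference in substance.
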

\begin{proof} The congruence \eqref{C1} follows from the identity
$$H_{{p-1}}(r)=H_{{p-1\over 2}}(r)+\sum_{k=1}^{{p-1\over 2}}{1\over k^r(1-p/k)^r}.$$
Moreover the identity  
$$H_{{p-1\over 2}}(r)=
H_{\lfloor{p\over 4}\rfloor}(r)+2^r\left(
\sum_{k=1}^{(p-1)/2}{1\over (p-k)^r}-\sum_{k=1}^{\lfloor p/4\rfloor}({1\over (p-2k)^r}\right)$$
yields \eqref{C3}. Now we show \eqref{C2}. 
\begin{align*}
H_{{p-1}}(r,s)&=\!\!\!\!
\sum_{1\leq i<j\leq {p-1\over 2}}{1\over i^r j^s}
+\sum_{1\leq i,j\leq {p-1\over 2}}{1\over i^r (p-j)^s}
+\sum_{1\leq j<i\leq {p-1\over 2}}{1\over (p-i)^r (p-j)^s}\\
&\equiv
H_{{p-1\over 2}}(r,s)+(-1)^s\,H_{{p-1\over 2}}(r)H_{{p-1\over 2}}(s)+(-1)^{r+s}H_{{p-1\over 2}}(s,r)\\
&\equiv
H_{{p-1\over 2}}(r,s)-H_{{p-1\over 2}}(s,r) \pmod{p}.
\end{align*}
By the stuffle product property
$$H_{{p-1\over 2}}(r,s)+H_{{p-1\over 2}}(s,r)+H_{{p-1\over 2}}(r+s)=H_{{p-1\over 2}}(r)H_{{p-1\over 2}}(s)\equiv 0\pmod{p}.$$
Therefore
$$2H_{{p-1\over 2}}(r,s)\equiv H_{p-1}(r,s)-H_{{p-1\over 2}}(r+s)\pmod{p}$$
and by applying (ii) and (vi) we obtain \eqref{C3}. 
\end{proof}

\begin{thm}\label{T21} For any prime $p>2$
\begin{equation}\label{Wmezzo}
H_{{p-1\over 2}}(2)+{7\over 6}\, p\,H_{{p-1\over 2}}(3) +{5\over 8}\, p^2\,H_{{p-1\over 2}}(4)\equiv 0
\pmod{p^4}.
\end{equation}
\end{thm}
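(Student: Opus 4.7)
The strategy is to apply Lemma~\ref{Ldue}'s expansion \eqref{C1} to rewrite each of $H_{p-1}(1)$, $H_{p-1}(2)$, $H_{p-1}(3)$ in terms of the half-sums $h_r:=H_{(p-1)/2}(r)$, substitute into the integer Wolstenholme-type identity~(iv), and simplify. Taking $r=1,2,3$ with $a=3$ in \eqref{C1} gives, modulo $p^4$,
\begin{align*}
H_{p-1}(1) &\equiv -p h_2 - p^2 h_3 - p^3 h_4,\\
H_{p-1}(2) &\equiv 2 h_2 + 2p h_3 + 3 p^2 h_4 + 4 p^3 h_5,\\
H_{p-1}(3) &\equiv -3p h_4 - 6 p^2 h_5 - 10 p^3 h_6.
\end{align*}
Substituting into (iv) and collecting, the leading combination should factor as $(1-p)\bigl[h_2 + p h_3 - \tfrac{p(1-2p)}{2} h_4\bigr]$, with a remainder involving $h_5$ and $h_6$.

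To finish, I would dispose of the higher-index terms via (vi). Since $h_6 = O(p)$, the order-$p^3$ coefficient of $h_6$ contributes only $O(p^4)$ and can be dropped. For $h_5 = O(1)$, (vi) gives only $h_5 \equiv -6 B_{p-5} \pmod{p}$, so a mod-$p^2$ refinement is needed; this is obtained by applying \eqref{C1} to $H_{p-1}(5)$ with $a=2$ and then plugging in (i) for $H_{p-1}(5)$ modulo $p^3$ together with (vi) for $h_7$ modulo $p$. After dividing through by the unit $(1-p)$, the residual $B_{p-5}$ contribution from the $h_5$ term should combine with the (vi) identification $h_4 \equiv \tfrac{62 p}{5}B_{p-5}\pmod{p^2}$ to convert the naive coefficient $-\tfrac{p}{2}$ of $h_4$ into $\tfrac{5 p^2}{8}$; an analogous but simpler absorption of $B_{p-3}$ converts the coefficient $p$ on $h_3$ into $\tfrac{7p}{6}$, using $h_3 \equiv -2 B_{p-3} \pmod{p}$ together with $h_2 \equiv \tfrac{7p}{3}B_{p-3}\pmod{p^2}$.

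The main obstacle will be the Bernoulli-number bookkeeping in the $h_5$ refinement: the raw substitution yields coefficients $(1,p,-\tfrac{p}{2})$ on $(h_2,h_3,h_4)$, which differ substantially from the target $(1,\tfrac{7p}{6},\tfrac{5p^2}{8})$, so the entire discrepancy must be produced—exactly—by the (vi) identifications folded back through the $h_5$ term. Keeping precise track of which Bernoulli contribution lives at which power of $p$ is the delicate point; in particular, the fraction $\tfrac{5}{8}$ emerges only after correctly refining $h_5$ modulo $p^2$ via the auxiliary application of \eqref{C1} at $r=5$, while $\tfrac{7}{6}$ already appears at mod-$p^2$ level and serves as a useful consistency check before pushing to mod $p^4$.
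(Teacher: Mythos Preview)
Your approach has a fatal structural gap. First, note that the paper's statement of (iv) contains a typo: the correct congruence (which the paper itself uses to derive \eqref{Hp2} in the Corollary) is
\[
H_{p-1}(1)\;\equiv\;-\tfrac{p}{2}\,H_{p-1}(2)\;-\;\tfrac{p^2}{6}\,H_{p-1}(3)\pmod{p^{5}}.
\]
With these $p$-factors restored, substitute your \eqref{C1}-expansions (taking $a=4,3,2$ for $r=1,2,3$ respectively). The coefficients of $h_2,h_3,h_4,h_5$ are
\[
-p+\tfrac{p}{2}\cdot 2=0,\quad
-p^{2}+\tfrac{p}{2}\cdot 2p=0,\quad
-p^{3}+\tfrac{p}{2}\cdot 3p^{2}-\tfrac{p^{2}}{6}\cdot 3p=0,\quad
-p^{4}+\tfrac{p}{2}\cdot 4p^{3}-\tfrac{p^{2}}{6}\cdot 6p^{2}=0,
\]
so the whole thing collapses to $0\equiv 0\pmod{p^5}$. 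This is not an accident: the congruence (iv) and the expansions \eqref{C1} both come from the same reflection $k\mapsto p-k$, so (iv) carries no information about the half-sums beyond what \eqref{C1} already encodes. Your method therefore cannot produce any constraint on $h_2,h_3,h_4$. If instead you keep the typo'd (iv) literally, your derived relation is simply false: at the $\bmod\,p^{2}$ level it reads $h_2+p\,h_3\equiv 0$, whereas (vi) gives $h_2+p\,h_3\equiv\tfrac{7p}{3}B_{p-3}-2pB_{p-3}=\tfrac{p}{3}B_{p-3}$, generically nonzero. This is why your ``Bernoulli bookkeeping'' could never close up, no matter how carefully you refined $h_5$.

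The paper's proof proceeds entirely differently. It writes $H_{(p-1)/2}(r)\equiv\sum_{k\le (p-1)/2}k^{m-r}\pmod{p^4}$ with $m=\varphi(p^4)$, evaluates each power sum by Faulhaber's formula in terms of $B_{m-k}(\tfrac12)=(2^{1-m+k}-1)B_{m-k}$, and then solves a small linear system for the coefficients $(\alpha_2,\alpha_3,\alpha_4)$ that annihilate the surviving $B_{m-2}(\tfrac12)$ and $B_{m-4}(\tfrac12)$ terms; the unique solution (up to scaling) is $(1,\tfrac{7}{6},\tfrac{5}{8})$. A secondary point: your route invokes (iv) and (vi) for $r=5,6$, which need $p>5$ and $p>9$, while the theorem is stated for all primes $p>2$.
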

\begin{proof}  Let $m=\varphi(p^4)=p^3(p-1)$ and let $B_n(x)$ the $n$-th Bernoulli polynomial.
 For $r=2,3,4$, Faulhaber's formula implies
\begin{align*}
\sum_{k=1}^{(p-1)/2}k^{m-r}&={B_{m-r+1}\left({p-1\over 2}\right)-B_{m-r+1}\over m-r+1}\\
&= {B_{m-r+1}\left({p-1\over 2}\right)-B_{m-r+1}\left({1\over 2}\right)\over m-r+1}+{B_{m-r+1}\left({1\over 2}\right)-B_{m-r+1}\over m-r+1}\\
&= \sum_{k=r}^m{m-r\choose k-r}{B_{m-k}\left({1\over 2}\right)\over k-r+1}\cdot \left({p\over 2}\right)^{k-r+1}
+{B_{m-r+1}\left({1\over 2}\right)-B_{m-r+1}\over m-r+1}.
\end{align*}
By Euler's theorem, $H_{{p-1\over 2}}(r)\equiv \sum_{k=1}^{(p-1)/2}k^{m-r}\pmod{p^4}$.
Since $m$ is even, $B_{m-k}=0$ when $m-k>1$ and $k$ is odd. Moreover, $pB_{m-k}$ is $p$-integral and
$$B_{m-k}\left({1\over 2}\right)=(2^{1-m+k}-1)B_{m-k}\equiv (2^{1+k}-1)B_{m-k}\pmod{p^4}.$$
Hence
\begin{align*}
\sum_{r=2}^4 \alpha_r\, p^{r-2} H_{{p-1\over 2}}(r)&\equiv 
{p\over 2}\left(\alpha_2 -{6\over 7}\,\alpha_3\right)B_{m-2}\left({1\over 2}\right)\\
&\qquad +{p^3\over 8}\left(\alpha_2 -3\,\alpha_3+4\alpha_4\right)B_{m-4}\left({1\over 2}\right)
\pmod{p^4}.
\end{align*}
The right-hand side vanishes if we let $\alpha_2=1$, $\alpha_3=7/6$, and $\alpha_4=5/8$.
\end{proof}

\begin{cor} For any prime $p>5$
\begin{align}
\label{Hp2}
H_{{p-1}}(2)&\equiv -2\,{H_{{p-1}}(1)\over p}+{2\over 5}\,p^3 B_{p-5}\pmod{p^4},\\
\label{H2}
H_{{p-1\over 2}}(2)&\equiv -7\,{H_{{p-1}}(1)\over p}+{17\over 10}\,p^3 B_{p-5}\pmod{p^4},\\
\label{H3}
H_{{p-1\over 2}}(3)&\equiv 6\,{H_{{p-1}}(1)\over p^2}-{81\over 10}\,p^2 B_{p-5}\pmod{p^3},\\
\label{unoduepiuunotre}
H_{{p-1\over 2}}(1,2)+p\,H_{{p-1\over 2}}(1,3)
&\equiv -{9\over 2}\,{H_{p-1}(1)\over p^2} -{49\over 20}\,\, p^2 B_{p-5}
\pmod{p^{3}}.
\end{align}
\end{cor}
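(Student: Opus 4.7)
The plan is to prove the four congruences in sequence: \eqref{Hp2} follows directly from (iv) and (i); \eqref{H2} and \eqref{H3} are obtained simultaneously by solving a $2\times 2$ linear system built from \eqref{C1} and Theorem \ref{T21}; and finally \eqref{unoduepiuunotre} is derived from a complementary-sum expansion of $H_{p-1}(1,2)$ together with all previously established ingredients.

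For \eqref{Hp2}, solving (iv) for $pH_{p-1}(2)$ gives $pH_{p-1}(2)\equiv -2H_{p-1}(1)-\tfrac{p^2}{3}H_{p-1}(3)\pmod{p^5}$; substituting (i) at $r=3$, namely $H_{p-1}(3)\equiv -\tfrac{6}{5}p^2B_{p-5}\pmod{p^3}$, and dividing by $p$ produces \eqref{Hp2}.

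For \eqref{H2} and \eqref{H3}, I would treat $H_{{p-1\over 2}}(2)$ and $pH_{{p-1\over 2}}(3)$ as two unknowns. The congruence \eqref{C1} with $r=2$, $a=3$ yields
\[
H_{p-1}(2)\equiv 2H_{{p-1\over 2}}(2)+2pH_{{p-1\over 2}}(3)+3p^2H_{{p-1\over 2}}(4)+4p^3H_{{p-1\over 2}}(5)\pmod{p^4},
\]
and Theorem \ref{T21} supplies a second relation; the determinant of the associated $2\times 2$ matrix is $\tfrac13$, so the system is invertible. The residues $H_{{p-1\over 2}}(4)\equiv \tfrac{62}{5}pB_{p-5}\pmod{p^2}$ and $H_{{p-1\over 2}}(5)\equiv -6B_{p-5}\pmod{p}$ come from (vi). Solving the system and substituting \eqref{Hp2} yields both \eqref{H2} (with Bernoulli coefficient $+\tfrac{17}{10}$) and \eqref{H3} (coefficient $-\tfrac{81}{10}$) at once.

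The congruence \eqref{unoduepiuunotre} is the main obstacle. I would begin with $H_{p-1}(1,2)=\sum_{j=2}^{p-1}H_{j-1}(1)/j^2$, split the outer sum at $j=(p-1)/2$, and substitute $j=p-k$ in the upper half. Using the expansions
\[
H_{p-k-1}(1)=H_{p-1}(1)+H_k(1)+pH_k(2)+p^2H_k(3)+O(p^3),\qquad \frac{1}{(p-k)^2}=\frac{1}{k^2}+\frac{2p}{k^3}+\frac{3p^2}{k^4}+O(p^3),
\]
and observing that the $H_{p-1}(1)$-piece is killed modulo $p^3$ by Wolstenholme (since $H_{{p-1\over 2}}(r)\equiv 0\pmod p$ for $r\geq 2$), the upper-half sum reorganizes via $\sum_{k=1}^{(p-1)/2}H_k(r)/k^s=H_{{p-1\over 2}}(r,s)+H_{{p-1\over 2}}(r+s)$ into a linear combination of $H_{{p-1\over 2}}$-sums. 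The quantity $H_{{p-1\over 2}}(1,2)+pH_{{p-1\over 2}}(1,3)$ emerges doubled, giving
\[
2\bigl(H_{{p-1\over 2}}(1,2)+pH_{{p-1\over 2}}(1,3)\bigr)\equiv H_{p-1}(1,2)-H_{{p-1\over 2}}(3)-pH_{{p-1\over 2}}(2,2)-3pH_{{p-1\over 2}}(4)-p^2H_{{p-1\over 2}}(3,2)-2p^2H_{{p-1\over 2}}(2,3)-3p^2H_{{p-1\over 2}}(1,4)-6p^2H_{{p-1\over 2}}(5)\pmod{p^3}.
\]
Each term on the right is then evaluated: (v) for $H_{p-1}(1,2)$; \eqref{H3} for $H_{{p-1\over 2}}(3)$; (vi) for $H_{{p-1\over 2}}(4)$ and $H_{{p-1\over 2}}(5)$; the stuffle identity $H_{{p-1\over 2}}(2)^2=2H_{{p-1\over 2}}(2,2)+H_{{p-1\over 2}}(4)$ (whose left side is $O(p^2)$) for $H_{{p-1\over 2}}(2,2)\pmod{p^2}$; and \eqref{C2} for the three odd-weight double sums $H_{{p-1\over 2}}(3,2)$, $H_{{p-1\over 2}}(2,3)$, $H_{{p-1\over 2}}(1,4)$. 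Collecting, the coefficient of $H_{p-1}(1)/p^2$ is $-9$ and that of $p^2B_{p-5}$ is $-\tfrac{49}{10}$; dividing by $2$ gives the claim. The principal difficulty is this final bookkeeping, together with the verification that all cross terms involving $H_{p-1}(1)$ disappear modulo $p^3$.
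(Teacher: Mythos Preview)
Your proposal is correct and follows essentially the same route as the paper: \eqref{Hp2} from (iv) and (i); \eqref{H2}--\eqref{H3} by solving the $2\times 2$ system coming from \eqref{C1} (at $r=2$) and Theorem~\ref{T21}, with (vi) supplying $H_{(p-1)/2}(4)$ and $H_{(p-1)/2}(5)$; and \eqref{unoduepiuunotre} via a complementary-sum expansion of $H_{p-1}(1,2)$, evaluated using (v), \eqref{H3}, (vi), the stuffle relation for $H_{(p-1)/2}(2,2)$, and \eqref{C2}. The only cosmetic difference is that the paper organises the expansion of $H_{p-1}(1,2)$ as the three-piece split $H_n(1,2)+H_n(1)\sum_j (p-j)^{-2}+\sum_{j<i}(p-i)^{-1}(p-j)^{-2}$ and then applies the stuffles $H_n(1)H_n(2)$, $H_n(1)H_n(3)$, whereas you write $H_{p-1}(1,2)=\sum_j H_{j-1}(1)/j^2$ and split on the outer index; the two bookkeepings are equivalent and produce the same intermediate relation for $2\bigl(H_{(p-1)/2}(1,2)+pH_{(p-1)/2}(1,3)\bigr)$.
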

\begin{proof} The congruence \eqref{Hp2} follows from (iv) and (i).
By \eqref{Wmezzo} and (vi) we have 
$$H_{{p-1\over 2}}(2)+{7\over 6}pH_{{p-1\over 2}}(3)+{31\over 4}\,B_{p-5}\,p^3\equiv 0\pmod{p^4}.$$
From \eqref{C1} and (vi), we deduce
$$H_{p-1}(2)=2H_{{p-1\over 2}}(2)+2pH_{{p-1\over 2}}(3)+{66\over 5}\,B_{p-5}\,p^3\equiv 0\pmod{p^4}.$$
By solving these two congruences with respect to $H_{{p-1\over 2}}(2)$ and $H_{{p-1\over 2}}(3)$, and by using \eqref{Hp2}, we get the result.
Now we show \eqref{unoduepiuunotre}.  We consider the identity
$$H_{p-1}(1,2)=H_n(1,2)+H_n(1)\sum_{j=1}^n{1\over (p-j)^2} +\sum_{1\leq j<i\leq n}{1\over (p-i)(p-j)^2}$$
and by expanding the sums like in Lemma \ref{Ldue}, we get
\begin{align*}
H_{p-1}(1,2)&\equiv H_n(1,2)+H_n(1)(H_n(2)+2p\,H_n(3)+3p^2\,H_n(4))-H_n(2,1)\\
&\qquad -p\,(2H_n(3,1)+H_n(2,2))-p^2\,(3H_n(4,1)+2H_n(3,2)+H_n(2,3))
\pmod{p^{3}}.
\end{align*}
By applying the stuffle product for $H_n(1)H_n(2)$ and $H_n(1)H_n(3)$, and (i), (ii) and (vi), we obtain
$$
H_n(1,2)+p\,H_n(3,1)\equiv {1\over 2}\,H_{p-1}(1,2)-{1\over 2}\,H_n(3)-{27\over 4}\,p^2 B_{p-5}
\pmod{p^{3}}.$$
Thus the proof of \eqref{unoduepiuunotre} is complete as soon as we apply (v) and \eqref{H3}.
\end{proof}

\begin{thm} For any prime $p>5$
\begin{align}\nonumber
2(-1)^{{p-1\over 2}}\sum_{k=0}^{{p-3\over 2}}{(-1)^k\over 2k+1}
&\equiv   \HH_{{p-1\over 2}}(1) -p\HH_{{p-1\over 2}}(2)-p^2\HH_{{p-1\over 2}}(2,1)\\\label{alts}
&\quad+p^3\HH_{{p-1\over 2}}(2,2) +p^4\HH_{{p-1\over 2}}(2,2,1)   \pmod{p^{5}},                                                           
\\\label{altsb}
&\equiv -{1\over 2}\,H_{{p-1\over 2}}(1)+{11\over 16}\, H_{p-1}(1)-{57\over 1280}\, p^4 B_{p-5}
\pmod{p^{5}}.
\end{align}
\end{thm}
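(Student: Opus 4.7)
The plan is to prove \eqref{alts} first, as an exact finite identity whose remainder has $p$-adic valuation at least $5$, and then to reduce each $\HH$-sum on its right-hand side to standard harmonic sums in order to obtain \eqref{altsb}.

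For \eqref{alts}, writing $n=(p-1)/2$, I aim to construct an exact identity of the form
$$2(-1)^{n}\sum_{k=0}^{n-1}\frac{(-1)^k}{2k+1}=\HH_{n}(1)-p\,\HH_{n}(2)-p^2\HH_{n}(2,1)+p^3\HH_{n}(2,2)+p^4\HH_{n}(2,2,1)+p^5 R_n,$$
with $R_n$ a $p$-integral sum. Such an identity should emerge by pairing the summands $k$ and $n-1-k$ via the natural involution (which sends $2k+1$ to its complement $p-2-2k$) and then iterating the partial-fraction decomposition
$$\frac{1}{a}=\frac{1}{a-p}+\frac{p}{a(a-p)}$$
to peel off contributions at each successive power of $p$. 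At each iteration, either the last entry of the current multi-index is incremented or a fresh $1$-entry is appended, generating the chain $(1)\to(2)\to(2,1)\to(2,2)\to(2,2,1)$; the alternating sign pattern $+,-,-,+,+$ reflects the interference between the $(-1)^k$ factor and the sign produced at each partial-fraction step. After four iterations the remainder carries $p^5$, yielding \eqref{alts} modulo $p^5$.

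With \eqref{alts} in hand, I derive \eqref{altsb} by evaluating each of its five terms to the precision dictated by its power of $p$. For the single-index $\HH$-sums I use the splitting $\HH_n(r)=H_{p-1}(r)-2^{-r}H_n(r)$ together with \eqref{Hp2} and \eqref{H2} to handle $r=2$ modulo $p^4$. For the depth-$2$ and depth-$3$ $\HH$-sums, I split each summation index into its odd and even components to reduce to $H_{p-1}$- and $H_n$-multiple sums, then apply (i)--(vi) together with \eqref{unoduepiuunotre} and \eqref{H3} at the respective moduli $p^3$, $p^2$, and $p$. Finally, identity (iv) lets me express $H_{p-1}(2)$ and $H_{p-1}(3)$ in terms of $H_{p-1}(1)$, so that after collecting terms the non-Bernoulli contributions collapse into $-\tfrac{1}{2}H_{n}(1)+\tfrac{11}{16}H_{p-1}(1)$ and the Bernoulli contributions into $-\tfrac{57}{1280}\,p^4 B_{p-5}$.

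The main obstacle is \eqref{alts}: producing the exact identity with the prescribed multi-indices and sign pattern, and verifying that after four iterations the remainder has valuation exactly $p^5$. Once \eqref{alts} is established, the deduction of \eqref{altsb} is a painstaking but systematic application of the preparatory results of Section 2.
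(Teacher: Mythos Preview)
The main gap is in \eqref{alts}. The paper does not obtain it by an iterated partial-fraction scheme. Instead it starts from two exact identities,
\[
\sum_{k=0}^n \frac{(-16)^k\binom{n+k}{2k}}{(2k+1)\binom{2k}{k}} = 2(-1)^n\!\sum_{k=0}^{n-1}\frac{(-1)^k}{2k+1}+\frac{1}{2n+1},
\qquad
\sum_{k=0}^n \frac{(-16)^k\binom{n+k}{2k}}{(2k+1)^2\binom{2k}{k}} = \frac{1}{(2n+1)^2},
\]
and the product formula \eqref{CCC}, which for $n=(p-1)/2$ reads
\[
\frac{(-16)^k\binom{n+k}{2k}}{\binom{2k}{k}}=\prod_{j=0}^{k-1}\Bigl(1-\frac{p^2}{(2j+1)^2}\Bigr)=\sum_{j\ge 0}(-1)^j p^{2j}\,\HH_k(\{2\}^j).
\]
Summing over $k$ turns the two left-hand sides into $\HH_n(1)-p^2\HH_n(2,1)+p^4\HH_n(2,2,1)$ and $\HH_n(2)-p^2\HH_n(2,2)+p^4\HH_n(2,2,2)$ modulo $p^6$; on the right, the $k=n$ term produces the same Morley-type quantity $(1-(-1)^n4^{p-1}\binom{2n}{n}^{-1})/p^r$ in both, and subtracting $p$ times the second congruence from the first cancels it and yields \eqref{alts}. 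So the chain $(1),(2),(2,1),(2,2),(2,2,1)$ with signs $+,-,-,+,+$ is forced by the expansion of the product $\prod(1-p^2/(2j+1)^2)$, not by single-variable partial fractions. Your step $1/a=1/(a-p)+p/(a(a-p))$ with $a=2k+1$ gives $a-p=-2(n-k)$, an \emph{even} denominator, so the iteration immediately leaves the ``odd'' world of the $\HH$-sums; there is no visible mechanism producing, e.g., $\HH_n(2,1)=\sum_{0\le i<j<n}(2i+1)^{-2}(2j+1)^{-1}$. As written, the plan for \eqref{alts} does not go through.

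For \eqref{altsb} the paper also takes a different route: rather than evaluating each $\HH$-term of \eqref{alts}, it computes the alternating sum directly as
\[
(-1)^n\sum_{k=0}^{n-1}\frac{(-1)^k}{2k+1}=2\sum_{k=1}^{m}\frac{1}{p-4k}-\sum_{k=1}^{n}\frac{1}{p-2k},\qquad m=\lfloor p/4\rfloor,
\]
expands geometrically in $p$, and uses \eqref{C3} to eliminate the $H_m(r)$'s in favour of $H_n(r)$'s before applying \eqref{H2}, \eqref{H3} and (vi). Your route---push each $\HH$-term of \eqref{alts} through the odd/even splitting---is viable in principle, but note that it needs $\HH_n(2,1)\pmod{p^3}$ and $\HH_n(2,2,1)\pmod p$, which require the index-reversal expansion of $\HH_n(r,s)$ (as in the proof of Theorem~\ref{TM}) together with \eqref{unoduepiuunotre}; the paper's direct computation bypasses these multi-index evaluations entirely.
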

\begin{proof} The following identities hold
$$\sum_{k=0}^n{(-16)^k{n+k\choose 2k}\over (2k+1){2k\choose k}}
=2(-1)^{n}\sum_{k=0}^{n-1}{(-1)^k\over 2k+1}+{1\over 2n+1}\,
\quad\mbox{and}\quad
\sum_{k=0}^n{(-16)^k{n+k\choose 2k}\over (2k+1)^2{2k\choose k}}={1\over (2n+1)^2}.$$
Let $n=(p-1)/2$. Notice that for $k=0,\dots,n$
\begin{equation}\label{CCC}
{(-16)^k{n+k\choose 2k}\over {2k\choose k}}=\prod_{j=0}^{k-1}\left(1-{p^2\over (2j+1)^2}\right)=
\sum_{j=0}^{k-1} (-1)^jp^{2j}\,\HH_k(\{2\}^j).
\end{equation}
Thus the two identities yield
\begin{align*} 
\HH_n(1)-p^2\HH_n(2,1)+p^4\HH_n(2,2,1)&\equiv 2(-1)^{n}\sum_{k=0}^{n-1}{(-1)^k\over 2k+1}+{1-(-1)^n 4^{p-1}{2n\choose n}^{-1}\over p}\pmod{p^{6}},\\
\HH_n(2)-p^2\HH_n(2,2)+p^4\HH_n(2,2,2)&\equiv {1-(-1)^n 4^{p-1}{2n\choose n}^{-1}\over p^2}\pmod{p^{6}},
\end{align*} 
and, by subtracting $p$ times the second congruence from the first one, we get \eqref{alts}.

As regards \eqref{altsb}, let $m=\lfloor p/4\rfloor$, then
\begin{align*} 
(-1)^{n}\sum_{k=0}^{n-1}{(-1)^k\over 2k+1}&=
\sum_{k=1}^{n}{(-1)^{k}\over p-2k}
=\sum_{k=1}^{m}{1\over p-4k}-\sum_{k=1}^{m}{1\over p-(4k-2)}\\
&=2\sum_{k=1}^{m}{1\over p-4k}-\sum_{k=1}^{n}{1\over p-2k}
\equiv -{1\over 2}\sum_{k=0}^4 {H_m(k+1)\over 4^k}\,p^k+{1\over 2}\sum_{k=0}^4 {H_n(k+1)\over 2^k}\,p^k\pmod{p^{5}}.
\end{align*}
By taking a suitable linear combination of the equations \eqref{C3} for $r=1,2,4$ we find that
$$\sum_{k=0}^4 {H_m(k+1)\over 4^k}\,p^k
\equiv
{3\over 2}\,H_n(1)+{13\over 16}\,pH_n(2)+{1\over 2}\,p^2H_n(3)+{143\over 512}\,p^3 H_n(4)+{1\over 8}\,p^4H_n(5)
\pmod{p^{5}}.$$
Hence
$$(-1)^{n}\sum_{k=0}^{n-1}{(-1)^k\over 2k+1}\equiv
-{1\over 4}\,H_n(1)-{5\over 32}\,pH_n(2)-{1\over 8}\,p^2H_n(3)-{79\over 1024}\,p^3 H_n(4)-{1\over 32}\,p^4H_n(5)             
\pmod{p^{5}},$$
and \eqref{altsb} follows by applying \eqref{H2}, \eqref{H3}, and (vi). 
\end{proof}

Note that by using $\eqref{alts}$, $\eqref{altsb}$ and (iii), it can be shown that for any prime $p>5$,
$$H_{{p-1\over 2}}(2,1,2)\equiv 2\,H_{{p-1\over 2}}(1,2,2)\equiv 10\,H_{{p-1\over 2}}(2,2,1)\equiv -{15\over 4}\, B_{p-5} \pmod{p}.$$
Moreover, another consequence of the previous theorem is a generalization of Morley's congruence \cite{Ca:53}: for any prime $p>5$,
$${(-1)^{{p-1\over 2}}\over 4^{p-1}}{p-1\choose {p-1\over 2}}\equiv
1-{1\over 4}\,p\,H_{p-1}(1)-{1\over 80}\,p^5 B_{p-5}\pmod{p^6}.$$

%%%%%%%%%%%%%%%%%%%%%%%%%%%%%%%%%%%%%%%%%%%%%%%%%%%%%%%%%%%%%%%%%%%%%%%%%%
\section{Proof of the main result}
%%%%%%%%%%%%%%%%%%%%%%%%%%%%%%%%%%%%%%%%%%%%%%%%%%%%%%%%%%%%%%%%%%%%%%%%%%
We are finally ready to prove the congruences announced in the introduction.

\begin{thm}\label{TM} For any prime $p>5$ we have
\begin{align}\label{mc1}
\sum_{k=0}^{{p-3\over 2}}{{2k \choose k}\over 16^k(2k+1)}&\equiv
(-1)^{{p-1\over 2}}\left({H_{p-1}(1)\over 12}+{3\over 160}\,p^4 B_{p-5}\right)
\pmod{p^5},\\
\label{mc2}
\sum_{k=0}^{{p-3\over 2}}{{2k \choose k}\over (-16)^k(2k+1)^2}&\equiv
{H_{p-1}(1)\over 5p}+{7\over 20}\,p^3 B_{p-5}
\pmod{p^4}.
\end{align}
\end{thm}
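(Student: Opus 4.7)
The starting point is to specialise the finite identities \eqref{iduno} (for \eqref{mc1}) and \eqref{iddue} (for \eqref{mc2}) to $n=(p-1)/2$, so that the left-hand side is (up to a rational multiple) the sum we want. On the right-hand side there is a ``main'' harmonic sum---the alternating $\sum(-1)^k/(2k+1)$ in \eqref{iduno}, which is already handled by \eqref{altsb}, or simply $\HH_n(2)$ in \eqref{iddue}---plus a ``correction'' involving $\binom{n+k}{2k+1}$. The key manoeuvre for the correction is to write $\binom{n+k}{2k+1} = \binom{n+k}{2k}\cdot(p-(2k+1))/(2(2k+1))$ and then combine with \eqref{CCC}, which gives
$$
\frac{\binom{2k}{k}}{(-16)^k\binom{n+k}{2k+1}} \;=\; \frac{2(2k+1)}{p-(2k+1)}\cdot\prod_{j=0}^{k-1}\Bigl(1-\frac{p^2}{(2j+1)^2}\Bigr)^{-1}.
$$

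Both factors admit $p$-adic power series expansions: $(p-(2k+1))^{-1}=-\sum_{j\geq 0}p^j/(2k+1)^{j+1}$, and the reciprocal product equals $1+p^2\HH_k(2)+p^4(\HH_k(2,2)+\HH_k(4))+O(p^6)$ (this follows from the complete-homogeneous identity $h_2=e_1^2-e_2$ applied to $x_i=p^2/(2i+1)^2$, combined with the stuffle $\HH_k(2)^2=2\HH_k(2,2)+\HH_k(4)$). Truncating to precision $p^4$ for \eqref{mc1} and $p^3$ for \eqref{mc2} and then interchanging summations rewrites each correction as a linear combination of $\HH_n$-type multiple harmonic sums of total weight at most $5$. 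After combining with the main harmonic piece, substantial cancellation occurs---in particular the $\HH_n(1)$ contribution and the triple sum $\HH_n(2,2,1)$ drop out entirely in \eqref{mc1} against the same quantities appearing in \eqref{altsb}---so the target sums end up expressed as $p$-adic polynomials in the $\HH_n(r)$ with $r\leq 5$ together with a handful of double sums such as $\HH_n(2,1)$, $\HH_n(2,2)$, $\HH_n(2,3)$, $\HH_n(4,1)$.

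The final step is to evaluate each surviving $\HH_n$-sum in terms of $H_{p-1}(1)$ and $B_{p-5}$. Every univariate $\HH_n(r)$ splits as $H_{p-1}(r)-H_{(p-1)/2}(r)/2^r$, and the two halves are read off from items (i), (vi), Theorem \ref{T21}, and the Corollary. Double sums $\HH_n(r,s)$ of odd total weight are reduced modulo $p$ by the reverse substitution $k\mapsto n-k$ inside the sum, which yields $\HH_n(r,s)\equiv (-1)^{r+s}H_{(p-1)/2}(s,r)/2^{r+s}\pmod{p}$ and then invokes \eqref{C2}; the even-weight double sum $\HH_n(2,2)$ is obtained from the stuffle identity once $\HH_n(2)$ and $\HH_n(4)$ are in hand; and $\HH_n(2,1)$ at the higher precision $p^3$ requires a similar reverse-expansion to order $p^2$, reducing it to the linear combination $H_{(p-1)/2}(1,2)+p\,H_{(p-1)/2}(1,3)$ that is precisely supplied by \eqref{unoduepiuunotre}. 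The principal obstacle is purely arithmetic bookkeeping: verifying that after every $\HH_n$ is substituted, the rational coefficients of $H_{p-1}(1)$ and of $p^{a}B_{p-5}$ collapse to the simple fractions claimed on the right-hand sides of \eqref{mc1} and \eqref{mc2}.
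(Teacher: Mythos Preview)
Your plan is correct and follows essentially the same route as the paper: specialise \eqref{iduno}/\eqref{iddue} at $n=(p-1)/2$, expand the factor $\binom{2k}{k}/\bigl((-16)^k\binom{n+k}{2k+1}\bigr)$ via \eqref{CCC} exactly as you describe (this is the paper's \eqref{conbin}), collect everything into $\HH_n$-sums, then convert those to $H_{p-1}$ and $H_{(p-1)/2}$ using the reverse substitution and finish with (i), (vi), \eqref{Hp2}--\eqref{unoduepiuunotre}. One small slip: the cancellation of $\HH_n(1)$ and $\HH_n(2,2,1)$ that you flag is against the $\HH$-expression \eqref{alts}, not the already-evaluated form \eqref{altsb}; the paper indeed uses \eqref{alts} at that step to obtain the clean intermediate formula \eqref{a2}, and only afterwards invokes the evaluations, but your description makes clear you have the right mechanism in mind.
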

\begin{proof} 
From \eqref{CCC}, it follows that
\begin{align}\label{conbin}\nonumber
{{2k\choose k}\over (-16)^k{n+k\choose 2k+1}}&={(2k+1){2k\choose k}\over (-16)^k(n-k){n+k\choose 2k}}
\equiv -{2\over \left(1-{p\over 2k+1}\right)\left(1-p^2\HH_k(2)+p^4\HH_k(4)-p^6\HH_k(6)\right)}\\\nonumber
&\equiv -2\left(1+{p\over 2k+1}+
\left({1\over (2k+1)^2}+\HH_k(2)\right)\,p^2+\left({1\over (2k+1)^3}+{\HH_k(2)\over 2k+1}\right)\,p^3
\right.\\
&\qquad\left.
\left({1\over (2k+1)^4}+{\HH_k(2)\over (2k+1)^2}+\HH_k(4)+\HH_k(2,2)\right)\,p^4
\right)\pmod{p^5}.
\end{align}
%%%%%%%%%%%%%%%%%%%%%%%%%%%%%%%%%%%%%%%%%%%%%%%%%%%%%%%%%%%%%%%%%%%%%%%%%%%%%%%%%%%%%%%%
Therefore, by \eqref{iduno}, \eqref{conbin} and \eqref{alts}, we get
\begin{align}\label{a2}\nonumber
{3(-1)^n\over 2}\sum_{k=0}^{n-1}{{2k\choose k}\over 16^k(2k+1)}&\equiv
-2p\HH_{n}(2) -p^2(\HH_{n}(3)+2\HH_{n}(2,1))-p^3\HH_{n}(4)\\
&\qquad-p^4(\HH_{n}(5)+\HH_{n}(2,3)+\HH_{n}(4,1)))\pmod{p^5}.
\end{align}
Now we replace the terms $\HH_n$ with the corresponding expressions involving $H_{p-1}$ and $H_n$:
\begin{align*}
\HH_n(r)&=H_{p-1}(r)-{H_n(r)\over 2^r},\\
\HH_n(r,s)&=\sum_{0<j<i\leq n}{1\over (2(n-i)+1)^r(2(n-j)+1)^s}=
{1\over(-2)^{r+s}}\sum_{0<j<i\leq n}{1\over j^s i^r\left(1-{p\over 2i}\right)^r\left(1-{p\over 2j}\right)^s}\\
&\equiv {1\over(-2)^{r+s}}\left(H_n(s,r)+{p\over 2}\left(r H_n(s,r+1)+s H_n(s+1,r)\right)\right.\\
&\qquad\left.+{p^2\over 4}\left({r+1\choose 2} H_n(s,r+2)+rs H_n(s+1,r+1)+{s+1\choose 2} H_n(s+2,r)\right)\right)
\pmod{p^3}.
\end{align*}
So the right-hand side of \eqref{a2} becomes
\begin{align*}
&-2pH_{p-1}(2)-p^2H_{p-1}(3)-p^3H_{p-1}(4)-p^4H_{p-1}(5)\\
&\qquad +{p\over 2}H_n(2)+{p^2\over 8}(H_n(3)+2H_n(1,2))
+{p^3\over 16}\left(H_n(4)+4H_n(1,3)+2H_n(2,2)\right)\\
&\qquad-{p^4\over 32}\left(H_n(5)+4H_n(2,3)+3H_n(3,2)+7H_n(1,4)\right) \pmod{p^5}
\end{align*} 
Since 
$$H_n(2,2)={1\over 2}\left( H_n(2)^2-H_n(4)\right)\equiv -{31\over 5}\, pB_{p-5} \pmod{p^2},$$
by (i), (ii), and (vi), the above expression simplifies to
$$-2p H_{p-1}(2)+{1\over 2}\,pH_n(2)+{1\over 8}\,p^2 H_n(3)
+{1\over 4}\,p^2\left(H_n(1,2)+pH_n(1,3)\right)+{513\over 320}\,p^4 B_{p-5}
\pmod{p^5}.$$
Finally we apply \eqref{Hp2}, \eqref{H2}, \eqref{H3} and \eqref{unoduepiuunotre} 
$${H_{p-1}(1)\over 8}+{9\over 320}\,p^4 B_{p-5}\pmod{p^5}$$
which concludes our proof of \eqref{mc1}.

%%%%%%%%%%%%%%%%%%%%%%%%%%%%%%%%%%%%%%%%%%%%%%%%%%%%%%%%%%%%%%%%%%%%%%%%%%%%%%%%%%%%%%%%%
As regards \eqref{mc2}, by \eqref{iddue} and \eqref{conbin}, we have
\begin{align}\label{a1}\nonumber
{5}\sum_{k=0}^{n-1}{{2k\choose k}\over (2k+1)^2(-16)^k}&\equiv
4\HH_{n}(2)-2(\HH_{n}(2)+p\HH_{n}(3)+p^2(\HH_{n}(4)+\HH_{n}(2,2))\\
&\qquad +p^3(\HH_{n}(5)+\HH_{n}(2,3)))\pmod{p^4}.
\end{align}
As before, after replacing the terms $\HH_n$, the right-hand side of \eqref{a1} becomes
\begin{align*}
&2H_{p-1}(2)-2pH_{p-1}(3)-2p^2H_{p-1}(4)-2p^3H_{p-1}(5)\\
&\qquad -{1\over 2}H_n(2)+{p\over 4}H_n(3)+{p^2\over 8}\left(H_n(4)- H_n(2,2)\right)
+{p^3\over 16}\left(H_n(5)-2H_n(2,3)-H_n(3,2)\right)  \pmod{p^4}.
\end{align*}
By (i), (ii), and (vi), it simplifies to
$$2H_{p-1}(2)-{1\over 2}H_n(2)+{1\over 4}p H_n(3) +{9\over 4}p^3B_{p-5}     \pmod{p^4}$$
Finally we apply \eqref{Hp2}, \eqref{H2} and \eqref{H3} and we get
$${H_{p-1}(1)\over p}+{7\over 4}\,p^3B_{p-5} \pmod{p^4}.$$
\end{proof}

%%%%%%%%%%%%%%%%%%%%%%%%%%%%%%%%%%%%%%%%%%%%%%%%%%%%%%%%%%%%%%%%%%%%%%%%%%
\section{The case $r=3$}
%%%%%%%%%%%%%%%%%%%%%%%%%%%%%%%%%%%%%%%%%%%%%%%%%%%%%%%%%%%%%%%%%%%%%%%%%%
From \eqref{idodd} for $r=3$, we have
\begin{align}\label{idtre}
\sum_{k=0}^{n-1}{{2k\choose k}\over 16^k}\left({1\over (2k+1)^3 }
-{3\over 4}{\HH_k(2)\over (2k+1)}\right)&=
\sum_{k=0}^{n-1}{(-1)^k\over (2k+1)^3}-{(-1)^n\over 4}
\sum_{k=0}^{n-1}{{2k\choose k}\HH_k(2)\over (-16)^k{n+k\choose 2k+1}}\cdot{1\over 2k+1}.
\end{align}
Let $n=(p-1)/2$ and let $m=\lfloor p/4\rfloor$, then,
by taking a suitable linear combination of the equations \eqref{C3} for $r=3,5$ we find that
\begin{align*} 
(-1)^{n}\sum_{k=0}^{n-1}{(-1)^k\over (2k+1)^3}
&=2\sum_{k=1}^{m}{1\over (p-4k)^3}-\sum_{k=1}^{n}{1\over (p-2k)^3}\\
&\equiv -{1\over 32}\left(
H_m(3)+ {3H_m(3)\over 4}\,p+{3H_m(4)\over 8}\,p^2\right)\\
&\qquad+{1\over 8}\left(
H_n(3)+ {3H_n(3)\over 2}\,p+{3H_n(4)\over 2}\,p^2\right)\\
&=-{1\over 64}\,H_n(3)-{3\over 16}\,pH_n(4)-{189\over 512}\,p^2H_n(5)\\
&\equiv -{3\over 32}\,{H_{n}(1)\over p^2}
+{21\over 1280}\, p^2 B_{p-5}\pmod{p^{3}}.
\end{align*}
Moreover, acting as in Theorem \ref{TM}, we obtain that
\begin{align*} 
\sum_{k=0}^{n-1}{{2k\choose k}\HH_k(2)\over (-16)^k{n+k\choose 2k+1}}\cdot{1\over 2k+1}
&\equiv 2\HH_n(2,1)+2p\HH_n(2,2)\\
&\qquad +2p^2(\HH_n(2,3)+\HH_n(4,1)+2\HH_n(2,2,1))\\
&\equiv -{9\over 8}\,{H_{p-1}(1)\over p^2}
+{9\over 320}\, p^2 B_{p-5}\pmod{p^{3}}.
\end{align*}
Hence
\begin{equation}\label{mc3e21}
(-1)^n\sum_{k=0}^{n-1}{{2k\choose k}\over 16^k}\left({1\over (2k+1)^3}
-{3\over 4}{\HH_k(2)\over (2k+1)}\right)\equiv {3\over 16}\,{H_{p-1}(1)\over p^2}
+{3\over 320}\, p^2 B_{p-5}\pmod{p^{3}}.
\end{equation}
On the other hand, since $3$ does not divide $2n+1$, by \eqref{CCC}, we have
$$
0=\sum_{k=0}^{n-1}{n+k\choose 2k+1}{(-1)^k\over 2k+1}\equiv
\sum_{k=0}^{n-1}{{2k\choose k}\over 16^k(2k+1)}
\left(1-p^2\,\HH_k(2)+p^4\HH_k(2,2)\right)\pmod{p^{5}}.$$
Since (the proof will be given in the next version)
$$\sum_{k=0}^{n-1}{{2k\choose k}\HH_k(2,2)\over 16^k(2k+1)}\equiv 
(-1)^n {5\over 864}\, B_{p-5} \pmod{p},$$
by \eqref{mc1}, it follows that
\begin{equation}\label{mc21}
\sum_{k=0}^{n-1}{{2k\choose k}\HH_k(2)\over 16^k(2k+1)}
\equiv (-1)^n\left({H_{p-1}(1)\over 12p^2}+{53\over 2160}\,p^2 B_{p-5}\right) \pmod{p^3}.
\end{equation}
Finally, by \eqref{mc3e21}, we get
\begin{equation}\label{mc21}
\sum_{k=0}^{n-1}{{2k\choose k}\over 16^k(2k+1)^3}
\equiv (-1)^n\left({H_{p-1}(1)\over 4p^2}+{1\over 36}\,p^2 B_{p-5}\right) \pmod{p^3}.
\end{equation}
Note that the values of the corresponding infinite series are known (see \cite{PP:10}):
$$
\sum_{k=0}^{n-1}{{2k\choose k}\over 16^k(2k+1)^3}={7\pi^3\over 216}
\quad\mbox{and}\quad
\sum_{k=0}^{\infty}{{2k\choose k}\HH_k(2)\over 16^k(2k+1)}={\pi^3\over 648}.
$$

%%%%%%%%%%%%%%%%%%%%%%%%%%%%%%%%%%%%%%%%%%%%%%%%%%%%%%%%%%%%%%%%%%%%%%%

\end{document}